\numberwithin{equation}{section}
\newtheorem{theorem}{Theorem}[section]
\newtheorem{lemma}[theorem]{Lemma}
\newtheorem*{thma*}{Theorem A}
\newtheorem*{thmb*}{Theorem B}
\newtheorem*{thmc*}{Theorem C}
\newtheorem*{thmd*}{Theorem D}
\newtheorem*{thme*}{Theorem E}
\newtheorem{corollary}[theorem]{Corollary}
\newtheorem{definition}[theorem]{Definition}
\newtheorem*{thm*}{Main Theorem}
\begin{document}

\makeatletter

\newdimen\bibspace
\setlength\bibspace{2pt}   
\renewenvironment{thebibliography}[1]{%
 \section*{\refname 
       \@mkboth{\MakeUppercase\refname}{\MakeUppercase\refname}}%
     \list{\@biblabel{\@arabic\c@enumiv}}%
          {\settowidth\labelwidth{\@biblabel{#1}}%
           \leftmargin\labelwidth
           \advance\leftmargin\labelsep
           \itemsep\bibspace
           \parsep\z@skip     %
           \@openbib@code
           \usecounter{enumiv}%
           \let\p@enumiv\@empty
           \renewcommand\theenumiv{\@arabic\c@enumiv}}%
     \sloppy\clubpenalty4000\widowpenalty4000%
     \sfcode`\.\@m}
    {\def\@noitemerr
      {\@latex@warning{Empty `thebibliography' environment}}%
     \endlist}

\makeatother

\pdfbookmark[2]{Control of fusion by elementary abelian subgroups of rank at least 2}{beg}

\title{{\bf Control of fusion by elementary abelian subgroups of rank at least 2} \footnotetext{\hspace*{-4 ex}
1. School of Mathematical Sciences, Peking University, Beijing, 100871, China\\
2. Department of Mathematics, Hubei University, Wuhan, $420062$, China\\
Supported by National Key R \& D Program of China(Grant No.2020YFE0204200) and NSFC grants (11771129)\\
}}

\author{{\small{Lizhong Wang$^1$, Xingzhong Xu$^2$, Jiping Zhang$^1$} }
}

\date{}
\maketitle

{\small

\noindent\textbf{Abstract.} {\small{In this paper, we focus on the subgroups control $p$-fusion, and we improve the
Theorem B of \cite{BGH} for odd prime. For odd prime, we prove that
elementary abelian subgroups of rank at least 2 can control $p$-fusion(see our Theorem B).
 }}\\

\noindent\textbf{Keywords: }{\small {fusion systems; elementary abelian subgroups }}

\noindent\textbf{Mathematics Subject Classification (2010):}  }

\section{\bf Introduction}

Burnside's fusion theorem tell us that if a finite group $G$ has an abelian Sylow $p$-subgroup $S$,
the morphisms of $\mathcal{F}_{S}(G)$ can be controlled by $\mathrm{Aut}_{G}(S)$. "control" means each morphism of $\mathcal{F}_{S}(G)$ can be written from $\mathrm{Aut}_{G}(S)$
with composition and inclusion.

What's about non-abelian $p$-group $S$?
There is a remarkable theorem which is named as Model theorem for constrained fusion systems tell us that
if $Q$ is normal in the saturated fusion system $\mathcal{F}$ and $Q$ is $\mathcal{F}$-centric, each morphism of $\mathcal{F}_{S}(G)$ can be written from $\mathrm{Aut}_{\mathcal{F}}(Q)$
with composition and inclusion. In other words, there exists a finite group $G$ such that $\mathcal{F}=\mathcal{F}_S(G)$ and $\mathrm{Aut}_{\mathcal{F}}(Q)=\mathrm{Aut}_{G}(Q)$.
Burnside's theorem can be seem as an example of the Model theorem.

In \cite{BGH}, Benson, Grodal and Henke found that small exponent abelian $p$-subgroups can control $p$-fusion, and they proved the following theorem.
\begin{thma*}\cite[Theorem B]{BGH}(Small exponent abelian p-subgroups control $p$-fusion)
 Let $\mathcal{G} \leq  \mathcal{F}$ be two saturated fusion systems on the same finite p-group $S$. Suppose that  $\mathrm{Hom}_{\mathcal{G}}(A,B) = \mathrm{Hom}_{\mathcal{F}}(A,B)$
  for all $A,B \leq S$ with $A,B$ elementary abelian if $p$ is odd, and abelian of exponent at most 4 if $p = 2$. Then
$\mathcal{G}= \mathcal{F}$.
\end{thma*}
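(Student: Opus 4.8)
\noindent\emph{Strategy.} A direct attack through Alperin's fusion theorem would reduce the problem to proving $\Aut_{\mathcal{G}}(Q)=\Aut_{\mathcal{F}}(Q)$ when $Q=S$ or $Q$ is $\mathcal{F}$-essential, but such $Q$ are typically far from elementary abelian, and the automorphism group of a $p$-group is not in general recoverable from fusion data on its elementary abelian subgroups (for instance a $p'$-automorphism is pinned down by its action on the \emph{quotient} $Q/\Phi(Q)$, not on any subgroup). So the plan is to pass to the classifying spaces of the two fusion systems, where ``detection on elementary abelian subgroups'' is exactly what a sharp centralizer decomposition provides. Attach to $\mathcal{G}\le\mathcal{F}$ their centric linking systems (which exist and are unique, by Chermak and Oliver) and the $p$-completed classifying spaces $B\mathcal{G}:=|\mathcal{L}_{\mathcal{G}}|^{\wedge}_{p}$, $B\mathcal{F}:=|\mathcal{L}_{\mathcal{F}}|^{\wedge}_{p}$, each with its structure map from $BS$. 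By Broto--Levi--Oliver the homotopy type of $B\mathcal{F}$ together with the map $BS\to B\mathcal{F}$ recovers $\mathcal{F}$, so it suffices to produce an equivalence $B\mathcal{G}\simeq B\mathcal{F}$ under $BS$; on mod-$p$ cohomology this is the statement that the subalgebras of $\mathcal{F}$- and $\mathcal{G}$-stable elements of $H^{*}(S;\mathbb{F}_{p})$ coincide, which is the computational shadow of the argument.

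\noindent\emph{The decomposition.} Apply the (fusion-theoretic) Dwyer centralizer decomposition
\[
 B\mathcal{F}\;\simeq\;\operatorname*{hocolim}_{E}\;BC_{\mathcal{F}}(E),
\]
where the homotopy colimit runs over a Quillen-type category of nontrivial elementary abelian subgroups $E\le S$ with $\mathcal{F}$-morphisms between them, and $C_{\mathcal{F}}(E)$ is the centralizer fusion system on $C_{S}(E)$; the crucial input is \emph{sharpness}, i.e.\ the vanishing of the higher limits of $E\mapsto H^{*}(BC_{\mathcal{F}}(E);\mathbb{F}_{p})$, which upgrades this from a cohomology isomorphism modulo nilpotents (Quillen) to an honest equivalence of $p$-complete spaces. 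The same holds for $\mathcal{G}$, and here the indexing categories for $\mathcal{G}$ and $\mathcal{F}$ literally coincide: their objects are the elementary abelian subgroups of $S$ and their morphisms are the relevant maps between such subgroups, which agree by hypothesis. Thus everything comes down to identifying the two functors $E\mapsto BC_{\mathcal{G}}(E)$ and $E\mapsto BC_{\mathcal{F}}(E)$.

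\noindent\emph{Descent and induction.} The hypothesis passes to centralizers: for elementary abelian $A,B\le C_{S}(E)$ the groups $EA$ and $EB$ are again elementary abelian, every morphism $A\to B$ in $C_{\mathcal{F}}(E)$ lifts to an $\mathcal{F}$-morphism $EA\to EB$ fixing $E$ pointwise, which by hypothesis is a $\mathcal{G}$-morphism and hence lies in $C_{\mathcal{G}}(E)$; therefore $C_{\mathcal{G}}(E)\le C_{\mathcal{F}}(E)$ again satisfy the hypothesis of the theorem, now on the $p$-group $C_{S}(E)$. When $E$ is not central in $S$ we have $C_{S}(E)\subsetneq S$ and conclude $C_{\mathcal{G}}(E)=C_{\mathcal{F}}(E)$ by induction on $|S|$; these identifications are compatible with the structure maps of the common indexing category, so feeding them into the decomposition yields $B\mathcal{G}\simeq B\mathcal{F}$ over $BS$, whence $\mathcal{G}=\mathcal{F}$.

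\noindent\emph{Where the difficulty lies.} Two points are genuinely hard. The first is establishing sharpness of the centralizer decomposition with the needed generality: it is not formal, and it is the technical heart of the proof. The second is the gap in the induction: it breaks exactly for the elementary abelian subgroups $E\le Z(\mathcal{F})$, where $C_{\mathcal{F}}(E)=\mathcal{F}$ already lives on all of $S$, so ``by induction'' is circular. These central classes have to be dealt with separately --- morally by first reducing to the case $Z(\mathcal{F})=1$, i.e.\ quotienting by $\Omega_{1}(Z(\mathcal{F}))$ --- and this is precisely where the prime enters. When $p=2$ the preimage in $S$ of an elementary abelian subgroup of $S/\Omega_{1}(Z(\mathcal{F}))$ is abelian of exponent at most $4$, so the (exponent-$\le 4$) hypothesis still applies to it and the reduction goes through; for odd $p$ the preimage can have exponent $p^{2}$, the hypothesis no longer reaches it, and the central part demands a different treatment --- and it is in this extra room at odd primes that one can further weaken ``elementary abelian'' to ``elementary abelian of rank at least $2$''.
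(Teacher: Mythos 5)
Your proposal is a strategy sketch, not a proof: at the two places where all the difficulty of the theorem is concentrated, you defer rather than argue. First, the whole reduction to the functor $E\mapsto BC_{\mathcal{F}}(E)$ rests on a \emph{sharp} centralizer decomposition for an arbitrary saturated fusion system, which you yourself call ``the technical heart'' and then assume; for abstract fusion systems (as opposed to genuine finite groups, where Jackowski--McClure/Dwyer applies) such sharpness statements are exactly the delicate point, and nothing in your text supplies it. Second, the induction on $|S|$ is broken for every nontrivial elementary abelian $E\leq Z(S)$, not just for $E\leq Z(\mathcal{F})$: if $E$ is central in $S$ then $C_{S}(E)=S$ and $C_{\mathcal{F}}(E)$ is again a fusion system on $S$, so ``induction on $|S|$'' says nothing, and you have not set up any secondary induction to handle it. Your proposed repair (quotient by $\Omega_{1}(Z(\mathcal{F}))$) is only gestured at: it needs $Z(\mathcal{G})$ versus $Z(\mathcal{F})$ to be compared, saturation of the quotient systems, verification that the hypothesis descends to $S/\Omega_{1}(Z(\mathcal{F}))$ (this is where exponent $4$ enters at $p=2$), and above all a lifting step from $\mathcal{G}/Z=\mathcal{F}/Z$ back to $\mathcal{G}=\mathcal{F}$; none of this is carried out, and for odd $p$ you explicitly concede the central case ``demands a different treatment'' without giving one. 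So the argument has genuine gaps precisely at the points the theorem is about.

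It is also worth saying that the reason you give for abandoning the Alperin-type attack is not a real obstruction, and in fact the cited source (and this paper, for its stronger Theorem B at odd primes) proves the statement purely algebraically along exactly that line: by Alperin's fusion theorem and downward induction on $|P|$ one reduces to $\mathcal{F}$-centric, fully normalized $P$ with $\mathrm{Aut}_{\mathcal{G}}(R)=\mathrm{Aut}_{\mathcal{F}}(R)$ for all larger $R$; Theorem 2.5 above (Main Lemma 2.4 of \cite{BGH}) then gives $\mathrm{Aut}_{\mathcal{F}}(P)=\langle \mathrm{Aut}_{\mathcal{G}}(P),\,C_{\mathrm{Aut}_{\mathcal{F}}(P)}(A)\rangle$ for a suitable normal abelian $A\leq P$ of small exponent, and Theorem 2.4 above (Theorem 2.1 of \cite{BGH}) guarantees such an $A$ (a maximal abelian subgroup of the characteristic exponent-$p$ subgroup $D$) with $C_{\mathrm{Aut}(P)}(A)$ a $p$-group, so that saturation and a Sylow argument force $\mathrm{Aut}_{\mathcal{G}}(P)=\mathrm{Aut}_{\mathcal{F}}(P)$. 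Thus one never needs to ``recover $\mathrm{Aut}(P)$ from its elementary abelian subgroups''; the point of \cite{BGH} was precisely to avoid the homotopy-theoretic machinery (linking systems, Broto--Levi--Oliver rigidity, sharp decompositions) that your route would have to develop.
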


In our paper, we prove an stronger version of above theorem for odd prime.
\begin{thmb*} Let $p$ be an odd prime. Let $\mathcal{G} \leq  \mathcal{F}$ be two saturated fusion systems on finite $p$-group $S$ and $S$ be not cyclic.
Suppose that $\mathrm{Hom}_{\mathcal{G}}(A,B) = \mathrm{Hom}_{\mathcal{F}}(A,B)$ for all $A,B \leq S$ where $A,B$ are elementary abelian subgroups of rank at least 2. Then
$\mathcal{G}= \mathcal{F}$.
\end{thmb*}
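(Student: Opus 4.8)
The plan is to deduce Theorem B from Theorem A. Since $\mathcal{G}\subseteq\mathcal{F}$, Theorem A applies the moment one knows that $\mathrm{Hom}_{\mathcal{G}}(A,B)=\mathrm{Hom}_{\mathcal{F}}(A,B)$ for \emph{all} elementary abelian subgroups $A,B\le S$. The cases where $A$ or $B$ is trivial are immediate, and the case $\operatorname{rank}\ge 2$ is exactly the hypothesis, so the whole problem reduces to showing $\mathrm{Hom}_{\mathcal{G}}(C_1,C_2)=\mathrm{Hom}_{\mathcal{F}}(C_1,C_2)$ for all subgroups $C_1,C_2\le S$ of order $p$. As morphisms in a fusion system are injective, each of these sets is either empty or consists of isomorphisms, and the empty case is trivial because $\mathcal{G}\subseteq\mathcal{F}$.

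The structural input I would establish first is the following: \emph{if $p$ is odd, $S$ is non-cyclic and $|C|=p$, then $C$ lies in some elementary abelian subgroup $E$ of rank $2$ with $E\le C_S(C)$.} The key point is that $C_S(C)$ is itself non-cyclic: were it cyclic, $C$ would be its unique subgroup of order $p$, hence characteristic in $C_S(C)$; combined with $N_S(C)=C_S(C)$ — which holds because $\mathrm{Aut}_S(C)=N_S(C)/C_S(C)$ is a $p$-subgroup of $\mathrm{Aut}(C)$, a group of order $p-1$, hence trivial — this forces $C_S(C)$ to be self-normalizing in $S$, i.e.\ $C_S(C)=S$ and $S$ cyclic, contrary to assumption. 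Now a non-cyclic $p$-group with $p$ odd contains a subgroup $E_0\cong(\Z/p)^2$; since $C\le \ZZ(C_S(C))$, the subgroup $CE_0$ is elementary abelian of rank $2$ or $3$ and contains $C$, so one may take $E$ of rank $2$ with $C\le E\le CE_0\le C_S(C)$. This is the step where both hypotheses ($p$ odd, $S$ not cyclic) are genuinely used.

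With this in hand, the rank-$1$ case follows from the claim: \emph{if $D\le S$ is fully normalized in $\mathcal{F}$ with $|D|=p$, then every $\varphi\in\mathrm{Iso}_{\mathcal{F}}(C,D)$ with $|C|=p$ lies in $\mathrm{Iso}_{\mathcal{G}}(C,D)$.} To prove it: since $\mathrm{Aut}_S(D)=1$ one has $N_\varphi=C_S(C)$, and since $D$ is fully normalized it is receptive (saturation of $\mathcal{F}$), so $\varphi$ extends to $\bar\varphi\in\mathrm{Hom}_{\mathcal{F}}(C_S(C),S)$. Choosing $E\le C_S(C)$ of rank $2$ with $C\le E$ as above, the restriction $\bar\varphi|_E\in\mathrm{Hom}_{\mathcal{F}}(E,\bar\varphi(E))$ is a morphism between two rank-$2$ elementary abelian subgroups, hence lies in $\mathrm{Hom}_{\mathcal{G}}(E,\bar\varphi(E))$ by hypothesis; restricting further to $C$ gives $\varphi=\bar\varphi|_C\in\mathrm{Hom}_{\mathcal{G}}(C,D)$. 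To finish: given arbitrary $C_1,C_2$ of order $p$ and $\varphi\in\mathrm{Hom}_{\mathcal{F}}(C_1,C_2)$, fix a fully normalized $\mathcal{F}$-conjugate $D$ of $C_1$ and any $\psi\in\mathrm{Iso}_{\mathcal{F}}(C_2,D)$; the claim gives $\psi,\psi\varphi\in\mathcal{G}$, hence $\varphi=\psi^{-1}\circ(\psi\varphi)\in\mathrm{Hom}_{\mathcal{G}}(C_1,C_2)$. Thus every elementary abelian pair satisfies the hypothesis of Theorem A, and $\mathcal{G}=\mathcal{F}$ follows.

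The main obstacle I anticipate is the second paragraph: one must see that non-cyclicity of $S$ propagates to non-cyclicity of $C_S(C)$ for every order-$p$ subgroup $C$, so that $C$ can be thickened to rank $2$ \emph{inside its own centralizer} (this is precisely what makes $N_\varphi=C_S(C)$ large enough to absorb $E$, and hence makes the extension-then-restriction argument go through). Everything else is routine manipulation with the saturation axioms — receptivity of fully normalized subgroups, the computation of $N_\varphi$, and closure of fusion systems under restriction of morphisms. It is also worth recording that both hypotheses are necessary: for $S$ cyclic the rank-$\ge 2$ condition is vacuous, and for $p=2$ the structural fact fails (e.g.\ for $S=Q_8$), which matches the exponent-$4$ condition appearing in Theorem A.
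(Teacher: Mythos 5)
Your proposal is correct, but it takes a genuinely different route from the paper. The paper does not deduce the result from Theorem A as a black box: it reruns the Benson--Grodal--Henke induction, using Alperin's fusion theorem to reduce to $\mathcal{F}$-centric, fully normalized (essential) subgroups $P$, then takes a Thompson critical subgroup $C$ of $P$ (Gorenstein) and argues by cases --- if $C$ is non-cyclic it produces a normal abelian subgroup of rank at least $2$ and invokes \cite[Main Lemma 2.4]{BGH}, while if $C$ is cyclic it shows $P$ cannot be radical (or, for $P=S$, that $\Omega_1(S)$ is a normal rank-$2$ elementary abelian subgroup). Your argument instead upgrades the hypothesis to that of Theorem A directly: for $|C|=p$ you show $C_S(C)$ is non-cyclic (your self-normalizing argument is correct, and for odd $p$ a non-cyclic $p$-group contains a subgroup $(\mathbb{Z}/p)^2$), embed $C$ in a rank-$2$ elementary abelian $E\le C_S(C)$, compute $N_\varphi=C_S(C)$ since $\mathrm{Aut}_S(D)=1$, extend $\varphi$ by receptivity of a fully normalized conjugate, and pull the extension back into $\mathcal{G}$ by restricting to $E$; the reduction of all elementary abelian pairs to the order-$p$ isomorphism case is routine factorization. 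What your approach buys is brevity and modularity: it uses \cite[Theorem B]{BGH} as a black box and only needs standard saturation facts (fully normalized implies receptive) plus elementary $p$-group theory, whereas the paper's approach re-enters the inductive machinery (critical subgroups, \cite[Main Lemma 2.4]{BGH}) and in exchange yields more structural information about which subgroups can be essential; note also that the paper's Case 2.2 is only sketched (``by the similar proof of \cite[Theorem B]{BGH}''), so your self-contained reduction is arguably tighter. Do make explicit in a final write-up the two standard facts you lean on (receptivity of fully normalized subgroups in a saturated fusion system, and existence of $(\mathbb{Z}/p)^2$ in non-cyclic $p$-groups for odd $p$), and spell out the one-line treatment of morphisms from a rank-$1$ to a higher-rank elementary abelian subgroup when verifying the hypothesis of Theorem A.
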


$Structure~ of ~ the~ paper:$ After recalling the basic definitions and properties of fusion systems in Section 2,
and we prove the Theorem B in Section 3.

\section{\bf Fusion systems}

In this section we collect some known results that will be needed later.  For the background theory of
fusion systems, we refer to \cite{AsKO}. For notations, we mainly reference \cite{A}.

\begin{definition} A $fusion ~ system$ $\mathcal{F}$ over a finite $p$-group
$S$ is a category whose objects are the subgroups of $S$, and whose
morphism sets $\mathrm{Hom}_{\mathcal{F}}(P,Q)$ satisfy the
following two conditions:

\vskip 0.3cm

(a) $\mathrm{Hom}_{S}(P,Q)\subseteq
\mathrm{Hom}_{\mathcal{F}}(P,Q)\subseteq\mathrm{Inj}(P,Q)$ for all
$P,Q\leq S$.

\vskip 0.3cm

(b) Every morphism in $\mathcal{F}$ factors as an isomorphism in
$\mathcal{F}$ followed by an inclusion.

\end{definition}

\begin{definition} Let $\mathcal{F}$ be a fusion system over a $p$-group
$S$.

$\bullet$ Two subgroups $P,Q$ are $\mathcal{F}$-$conjugate$ if they
are isomorphic as objects of the category $\mathcal{F}$. Let
$P^{\mathcal{F}}$ denote the set of all subgroups of $S$ which are
$\mathcal{F}$-conjugate to $P$.

$\bullet$ A subgroup $P\leq S$ is $fully~automised$ in $\mathcal{F}$
if $\mathrm{Aut}_{S}(P)\in
\mathrm{Syl}_{p}(\mathrm{Hom}_{\mathcal{F}}(P,P)=\mathrm{Aut}_{\mathcal{F}}(P))$.

$\bullet$ A subgroup $P\leq S$ is $receptive$ in $\mathcal{F}$ if it
has the following property: for each $Q\leq S$ and each $\varphi\in
\mathrm{Iso}_{\mathcal{F}}(Q, P)$, if we set
$$N_{\varphi}=N_{\varphi}^{\mathcal{F}}=\{g\in N_{S}(G)|\varphi \circ c_{g}\circ \varphi^{-1}\in \mathrm{Aut}_{S}(P)\},$$
then there is $\overline{\varphi}\in
\mathrm{Hom}_{\mathcal{F}}(N_{\varphi},S)$ such that
$\overline{\varphi}|_{Q}=\varphi$.

$\bullet$ A fusion system $\mathcal{F}$ over a $p$-group $S$ is
$saturated$ if each subgroup of $S$ is $\mathcal{F}$-conjugate to a
subgroup which is fully automised and receptive.
\end{definition}

\begin{definition} Let $\mathcal{F}$ be a fusion system over a $p$-group
$S$.

$\bullet$ A subgroup $P\leq S$ is $fully~centralised$ in
$\mathcal{F}$ if $|C_{S}(P)|\geq |C_{S}(Q)|$ for all $Q\in
P^{\mathcal{F}}$.

$\bullet$ A subgroup $P\leq S$ is $fully~normalised$ in
$\mathcal{F}$ if $|N_{S}(P)|\geq |N_{S}(Q)|$ for all $Q\in
P^{\mathcal{F}}$.

$\bullet$ A subgroup $P\leq S$ is $\mathcal{F}$-$centric$ if
$C_{S}(Q)=Z(Q)$ for $Q\in P^{\mathcal{F}}$.

$\bullet$ A subgroup $P\leq S$ is $\mathcal{F}$-$radical$ if
$\mathrm{Out}_{\mathcal{F}}(P)=\mathrm{Aut}_{\mathcal{F}}(P)/\mathrm{Inn}(P)$
is $p$-reduced; i.e., if $O_{p}(\mathrm{Out}_{\mathcal{F}}(P))=1$.

$\bullet$ A subgroup $P\leq S$ is $normal$ in $\mathcal{F}$ (denoted
$P\trianglelefteq \mathcal{F}$) if for all $Q,R\in S$ and all
$\varphi\in \mathrm{Hom }_{\mathcal{F}}(Q,R)$, $\varphi$ extends to
a morphism $\overline{\varphi}\in \mathrm{Hom
}_{\mathcal{F}}(QP,RP)$ such that $(P)\overline{\varphi}=P$.

\end{definition}

\begin{theorem}(Alperin's theorem for fusion systems). Let $\mathcal{F}$ be a saturated fusion system over a $p$-group
$S$. Then for each morphism $\varphi\in \mathrm{Iso}_{\mathcal{F}}(P,R)$ in
$\mathcal{F}$, there exist sequences of subgroups of $S$
$$P=P_{0},P_{1},\ldots,P_{k}=R,~~~~~ \mathrm{and }~~~~~ Q_{1},Q_{2},\ldots,Q_{k},$$
and morphism $\psi_{i}\in \mathrm{Aut}_{\mathcal{F}}(Q_{i})$, such
that

$(a)$ $Q_{i}$ is fully~normalised in $\mathcal{F}$,
$\mathcal{F}$-radical, and $\mathcal{F}$-centric for each $i$;

$(b)$ $P_{i-1},P_{i}\leq Q_{i}$ and $\psi_{i}(P_{i-1})=P_{i}$ for
each $i$; and

$(c)$ $\psi=\psi_{k}\circ\psi_{k-1}\circ\cdots\circ\psi_{1}$.

\end{theorem}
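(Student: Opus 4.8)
The plan is to prove the statement by downward induction on $|P|$ (equivalently, by induction on the index $[S:P]$). Call an $\mathcal{F}$-isomorphism \emph{expressible} if it admits a factorisation of the type described in the statement, i.e.\ as a composite of restrictions of automorphisms $\psi_i\in\mathrm{Aut}_{\mathcal{F}}(Q_i)$ with each $Q_i$ fully normalised, $\mathcal{F}$-radical and $\mathcal{F}$-centric. The collection of expressible morphisms is closed under composition, under inversion (invert each $\psi_i$), and under passage to restrictions, and it contains every $S$-conjugation: indeed $S$ itself is fully normalised, $\mathcal{F}$-centric, and $\mathcal{F}$-radical, the last because $\mathrm{Aut}_S(S)=\mathrm{Inn}(S)\in\mathrm{Syl}_p(\mathrm{Aut}_{\mathcal{F}}(S))$ forces $O_p(\mathrm{Out}_{\mathcal{F}}(S))=1$, and any $c_g$ with $g\in S$ restricts from $\mathrm{Aut}_{\mathcal{F}}(S)$. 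The base case $P=S$ is thus immediate. Throughout I will use the standard fact that in a saturated fusion system a fully normalised subgroup is fully automised, fully centralised and receptive.

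First I would reduce to the case of an automorphism of a fully normalised subgroup. Given $\varphi\colon P\to R$, pick a fully normalised representative $R^{\ast}$ of the $\mathcal{F}$-conjugacy class of $P$; since expressibility is closed under composition and inversion it is enough to treat isomorphisms with target $R^{\ast}$, so assume $R$ is fully normalised. As $R$ is fully automised, $\mathrm{Aut}_S(R)\in\mathrm{Syl}_p(\mathrm{Aut}_{\mathcal{F}}(R))$, so there is $\chi\in\mathrm{Aut}_{\mathcal{F}}(R)$ with $\chi\,(\varphi\,\mathrm{Aut}_S(P)\,\varphi^{-1})\,\chi^{-1}\le\mathrm{Aut}_S(R)$. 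Setting $\varphi'=\chi\circ\varphi$ gives $N_{\varphi'}=N_S(P)$, and receptivity of $R$ extends $\varphi'$ to $\overline{\varphi'}\in\mathrm{Hom}_{\mathcal{F}}(N_S(P),S)$. If $P<S$ then $P<N_S(P)$, so $\overline{\varphi'}$ is an isomorphism between subgroups of order greater than $|P|$ and is expressible by induction; hence so is its restriction $\varphi'$, and $\varphi=\chi^{-1}\circ\varphi'$. Everything therefore comes down to showing that each $\chi\in\mathrm{Aut}_{\mathcal{F}}(R)$ with $R$ fully normalised is expressible.

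Here I would split into three cases according to the nature of $R$. If $R$ is $\mathcal{F}$-centric and $\mathcal{F}$-radical, then $\chi$ is already of the required form and there is nothing to do. If $R$ is not $\mathcal{F}$-centric, then full centralisation gives $C_S(R)>Z(R)$, so $RC_S(R)>R$; since $C_S(R)\le N_{\chi}$, receptivity extends $\chi$ to $\overline{\chi}\in\mathrm{Aut}_{\mathcal{F}}(RC_S(R))$, which is expressible by induction because $|RC_S(R)|>|R|$, whence $\chi=\overline{\chi}|_R$ is expressible. The remaining case, $R$ $\mathcal{F}$-centric but not $\mathcal{F}$-radical, is the heart of the argument. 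Here $O_p(\mathrm{Out}_{\mathcal{F}}(R))\neq 1$, so $\mathrm{Out}_{\mathcal{F}}(R)$ has no strongly $p$-embedded subgroup, since any such subgroup would contain the nontrivial normal $p$-subgroup $O_p(\mathrm{Out}_{\mathcal{F}}(R))$ in every conjugate, contradicting the defining intersection condition. Invoking the standard group-theoretic fact that a finite group without a strongly $p$-embedded subgroup is generated by the normalisers of its nontrivial $p$-subgroups, and passing to $\mathrm{Out}_{\mathcal{F}}(R)$ and lifting, one sees that $\mathrm{Aut}_{\mathcal{F}}(R)$ is generated by $\mathrm{Inn}(R)$ together with the normalisers $N_{\mathrm{Aut}_{\mathcal{F}}(R)}(\mathrm{Aut}_U(R))$ over the subgroups $R<U\le N_S(R)$. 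Writing $\chi$ accordingly, each factor $\alpha$ normalises some $\mathrm{Aut}_U(R)$; because $R$ is centric, $U\le N_{\alpha}$, so receptivity extends $\alpha$ to an element of $\mathrm{Aut}_{\mathcal{F}}(U)$ with $U>R$, which is expressible by induction. Hence $\chi$ is expressible and the induction closes.

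The hard part will be this last case, namely turning the purely group-theoretic generation statement into the fusion-theoretic assertion that the contributing automorphisms genuinely extend to strictly larger subgroups of $S$. Making this precise requires the centricity of $R$, which is exactly what lets one identify the $p$-subgroups of $\mathrm{Aut}_{\mathcal{F}}(R)$ lying above $\mathrm{Inn}(R)$ with the overgroups of $R$ inside $N_S(R)$, and thereby guarantee that the extension of $\alpha$ stabilises $U$; it also uses receptivity repeatedly to build the extensions. This is precisely the point at which the hypothesis $\mathcal{F}$-radical becomes essential, since radical centric subgroups are exactly those whose automorphisms cannot be reduced to larger subgroups in this way, so that they must be retained as the $Q_i$ in the final factorisation.
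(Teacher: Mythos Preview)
The paper does not give a proof of this theorem at all: Alperin's fusion theorem is stated in Section~2 as background and is simply quoted (with an implicit reference to \cite{AsKO}) when it is used at the start of the proof of Theorem~B. So there is no ``paper's own proof'' to compare against.

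That said, your argument is essentially the standard one and is correct in outline. Two small comments. First, in Case~(b) you write that receptivity extends $\chi$ to $\overline{\chi}\in\mathrm{Aut}_{\mathcal F}(RC_S(R))$; receptivity only gives $\overline{\chi}\in\mathrm{Hom}_{\mathcal F}(RC_S(R),S)$, but this is harmless since an $\mathcal F$-isomorphism between subgroups of order $>|R|$ is already expressible by induction. Second, in Case~(c) you take a detour through strongly $p$-embedded subgroups that is not needed for the theorem \emph{as stated}. Since $O_p(\mathrm{Out}_{\mathcal F}(R))\neq 1$, its preimage $K$ in $\mathrm{Aut}_{\mathcal F}(R)$ is a normal $p$-subgroup strictly containing $\mathrm{Inn}(R)$; full automisation puts $K\le\mathrm{Aut}_S(R)$, and centricity of $R$ identifies $K$ with $\mathrm{Aut}_U(R)$ for a single $U$ with $R<U\le N_S(R)$. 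Then \emph{every} $\chi\in\mathrm{Aut}_{\mathcal F}(R)$ already normalises $\mathrm{Aut}_U(R)$, so $U\le N_\chi$ and $\chi$ extends (and, by the centricity computation you sketch, the extension really lands in $\mathrm{Aut}_{\mathcal F}(U)$). The strongly $p$-embedded machinery is what one needs for the sharper version with $\mathcal F$-\emph{essential} $Q_i$, but for ``centric and radical'' the direct argument suffices.
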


The following theorems are the key steps to prove the Theorem B.

\begin{theorem}\cite[Theorem 2.1]{BGH} Let $p$ be an odd prime. Let $P$ be a finite $p$-group. There exists a characteristic subgroup $D$ of $P$, of
exponent p, such that $[D,P] \leq Z(D)$, and
such that every non-trivial $p'$-automorphism of $P$ restricts to a non-trivial automorphism of $D$. Furthermore, for any such $D$ and any maximal (with
respect to inclusion) abelian subgroup $A$ of $D$ we have $A \unlhd P$ and $C_{\mathrm{Aut}(P)}(A)$
is a $p$-group.
\end{theorem}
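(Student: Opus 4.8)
The plan is to prove the two halves of the statement --- the existence of $D$, and the properties of a maximal (by inclusion) abelian subgroup $A$ of such a $D$ --- by quite different arguments, the existence being the harder half.

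For the existence I would build $D$ from Thompson's critical subgroup. Every $p$-group $P$ has a characteristic subgroup $C$ with $\Phi(C)\le Z(C)$ (so $C$ has nilpotency class at most $2$), with $[P,C]\le Z(C)$ and $C_P(C)=Z(C)$, and such that every non-trivial $p'$-automorphism of $P$ restricts non-trivially to $C$. Since $p$ is odd and $C$ has class at most $2$, $C$ is a regular $p$-group, so $D:=\Omega_1(C)$ is characteristic in $C$, hence in $P$, and has exponent $p$. As $D\le C$ is normal in $P$ we get $[D,P]\le D\cap[C,P]\le D\cap Z(C)\le Z(D)$, which is the second asserted property. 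Finally, if $\alpha$ is a non-trivial $p'$-automorphism of $P$ then $\alpha$ acts non-trivially on $C$; and a $p'$-automorphism of a regular $p$-group that is trivial on $\Omega_1$ of that group is trivial, so $\alpha$ acts non-trivially on $D=\Omega_1(C)$. This establishes the first part.

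For the second part let $A$ be a maximal abelian subgroup of $D$. Then $A=C_D(A)$ (any element centralising $A$ would, together with $A$, generate a larger abelian subgroup), and since $Z(D)$ centralises $D$ we have $Z(D)\le C_D(A)=A$; as $D$ has class $\le 2$, also $D'\le Z(D)\le A$. From $[A,P]\le[D,P]\le Z(D)\le A$ we obtain $A\unlhd P$, hence $A\unlhd D$. Now let $\alpha\in C_{\mathrm{Aut}(P)}(A)$. Since $C_{\mathrm{Aut}(P)}(A)$ is a $p$-group precisely when it contains no non-trivial $p'$-element, it suffices to prove that any $p'$-element $\alpha\in C_{\mathrm{Aut}(P)}(A)$ is trivial. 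Restricting such an $\alpha$ to the characteristic subgroup $D$, it fixes $A$ pointwise; for $d\in D$ and $a\in A$, working in the class-$\le 2$ group $D$ (all commutators central, $[d^{-1},a]=[d,a]^{-1}$) and using that $\alpha$ fixes $a$ and fixes $[d,a]\in D'\le A$, one computes
\[[\,\alpha(d)d^{-1},\,a\,]=[\alpha(d),a]\,[d^{-1},a]=[\alpha(d),a]\,[d,a]^{-1}=\alpha([d,a])\,[d,a]^{-1}=[d,a]\,[d,a]^{-1}=1.\]
Hence $\alpha(d)d^{-1}\in C_D(A)=A$ for all $d\in D$, so $\alpha|_D$ induces the identity on $A$ and on $D/A$, i.e.\ stabilises the normal series $1\le A\le D$; as the stabiliser of a normal series of a finite $p$-group is a $p$-group, $\alpha|_D=1$. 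But by the first part a non-trivial $p'$-automorphism of $P$ is non-trivial on $D$, so $\alpha=1$. Therefore $C_{\mathrm{Aut}(P)}(A)$ is a $p$-group.

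The main obstacle is the first part, which rests on Thompson's critical-subgroup theorem and on the coprime-action lemma that a $p'$-automorphism of a regular $p$-group trivial on its $\Omega_1$ is trivial; this is exactly where the hypothesis that $p$ is odd enters --- regularity of class-$2$ groups, the fact that $\Omega_1(C)$ has exponent $p$, and this detection lemma all fail for $p=2$ (already for $C=Q_8$). The second part is essentially self-contained once $D$ is available; the one point requiring care there is that $A$ need not be characteristic in $D$, so one cannot restrict an arbitrary automorphism of $P$ to $A$ --- but restricting those automorphisms that centralise $A$, which are the only ones the statement concerns, is harmless, and the class-$\le 2$ commutator identity then finishes the argument.
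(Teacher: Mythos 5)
Your proof is correct. The paper does not prove this statement itself --- it is quoted directly from Benson--Grodal--Henke \cite[Theorem 2.1]{BGH} --- and your argument (Thompson's critical subgroup $C$, taking $D=\Omega_1(C)$ and using that a class-$2$ group is regular for odd $p$ together with the fact that a $p'$-automorphism trivial on $\Omega_1$ is trivial, then $C_D(A)=A$, the class-$2$ commutator computation and the coprime stabilizer-of-a-series argument for the second half) is essentially the same proof as in that source.
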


\begin{theorem}\cite[Main Lemma 2.4]{BGH} Let $\mathcal{G} \leq  \mathcal{F}$ be two saturated fusion systems on the same
finite $p$-group $S$, and $P \leq S$ an $\mathcal{F}$-centric and fully $\mathcal{F}$-normalized subgroup, with $\mathrm{Aut}_{\mathcal{F}}(R)=\mathrm{Aut}_{\mathcal{G}}(R)$ for every
$P<R \leq N_S(P)$. Suppose that
there exists a subgroup $Q\unlhd P$ with $\mathrm{Hom}_{\mathcal{F}}(Q,S) = \mathrm{Hom}_{\mathcal{G}}(Q,S).$ Then
$\mathrm{Aut}_{\mathcal{F}}(P)=
\langle \mathrm{Aut}_{\mathcal{G}}(P),C_{\mathrm{Aut}_{\mathcal{F}}(P)}(Q)\rangle$.
\end{theorem}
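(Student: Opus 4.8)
The inclusion $\langle\mathrm{Aut}_{\mathcal G}(P),C_{\mathrm{Aut}_{\mathcal F}(P)}(Q)\rangle\le\mathrm{Aut}_{\mathcal F}(P)$ is immediate, so only the reverse inclusion is at issue, and for it I would prove: every $\alpha\in\mathrm{Aut}_{\mathcal F}(P)$ agrees on $Q$ with some $\beta\in\mathrm{Aut}_{\mathcal G}(P)$. Granting this, $\beta^{-1}\alpha$ restricts to $\mathrm{id}_Q$, hence lies in $C_{\mathrm{Aut}_{\mathcal F}(P)}(Q)$, and $\alpha=\beta(\beta^{-1}\alpha)$ exhibits $\alpha$ in the required subgroup. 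Since $\alpha|_Q\in\mathrm{Hom}_{\mathcal F}(Q,S)=\mathrm{Hom}_{\mathcal G}(Q,S)$ by hypothesis, the content is to extend the $\mathcal G$-isomorphism $\alpha|_Q\colon Q\to\alpha(Q)$ (note $\alpha(Q)\unlhd P$) to a $\mathcal G$-automorphism of $P$.

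To organise this I would pass to the normalizer systems. As $P$ is $\mathcal F$-centric and fully $\mathcal F$-normalized it is also $\mathcal G$-centric and fully $\mathcal G$-normalized (both notions depend only on $S$ and on the conjugacy class, and $P^{\mathcal G}\subseteq P^{\mathcal F}$), so $N_{\mathcal F}(P)$ and $N_{\mathcal G}(P)$ are saturated and constrained over $N_S(P)$; by the model theorem they are realised by finite groups $M\ge P$ and $M_0\ge P$ with $N_S(P)\in\mathrm{Syl}_p(M)\cap\mathrm{Syl}_p(M_0)$, $C_M(P)=Z(P)=C_{M_0}(P)$, and $\mathrm{Aut}_{\mathcal F}(P)\cong M/Z(P)$, $\mathrm{Aut}_{\mathcal G}(P)\cong M_0/Z(P)$. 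Writing $L\le M$ for the preimage of $\mathrm{Aut}_{\mathcal G}(P)$ (so $P,N_S(P)\le L$, since $\mathrm{Inn}(P),\mathrm{Aut}_S(P)\le\mathrm{Aut}_{\mathcal G}(P)$), restriction to $Q$ identifies $\{\alpha|_Q:\alpha\in\mathrm{Aut}_{\mathcal F}(P)\}$ with $\{c_m|_Q:m\in M\}$ and $\{\beta|_Q:\beta\in\mathrm{Aut}_{\mathcal G}(P)\}$ with $\{c_m|_Q:m\in L\}$; the stabiliser in $M$ of the inclusion $Q\hookrightarrow P$ being $C_M(Q)$, the claim reduces to $M=L\cdot C_M(Q)$. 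The hypothesis $\mathrm{Aut}_{\mathcal F}(T)=\mathrm{Aut}_{\mathcal G}(T)$ for $P<T\le N_S(P)$ feeds in here: using $\mathrm{Aut}_{N_{\mathcal F}(P)}(T)=\{\psi\in\mathrm{Aut}_{\mathcal F}(T):\psi(P)=P\}$ and its $\mathcal G$-analogue it gives $\mathrm{Aut}_{N_{\mathcal F}(P)}(T)=\mathrm{Aut}_{N_{\mathcal G}(P)}(T)\subseteq\mathrm{Aut}_{\mathcal G}(T)$, and restricting such a $\mathcal G$-automorphism of $T$ to $P\le T$ shows $c_m|_P\in\mathrm{Aut}_{\mathcal G}(P)$ for $m\in N_M(T)$, i.e. $N_M(T)\le L$ for every $T$ with $P<T\le N_S(P)$. (Applying Alperin's fusion theorem inside $N_{\mathcal F}(P)$, and using that every $N_{\mathcal F}(P)$-centric subgroup contains $P$, one may also assume $P=O_p(M)$.)

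The remaining step $M=L\cdot C_M(Q)$ is the main obstacle, and this is where the last hypothesis $\mathrm{Hom}_{\mathcal F}(Q,S)=\mathrm{Hom}_{\mathcal G}(Q,S)$ enters — it yields $\mathrm{Aut}_{\mathcal F}(Q)=\mathrm{Aut}_{\mathcal G}(Q)$, $Q^{\mathcal F}=Q^{\mathcal G}$, and the coincidence of "fully normalized'' for $Q$ and its conjugates in $\mathcal F$ and in $\mathcal G$. I would argue Frattini-style: for $m\in M$ the $\mathcal G$-isomorphism $c_m|_Q\colon Q\to Q^m$ extends to the $\mathcal F$-automorphism $c_m|_P$ of $P$, whence $(c_m|_Q)\,\mathrm{Aut}_P(Q)\,(c_m|_Q)^{-1}=\mathrm{Aut}_P(Q^m)$; choose a fully $\mathcal G$-normalized conjugate $\widehat Q$ of $Q$ and a $\mathcal G$-isomorphism $\rho\colon Q^m\to\widehat Q$, adjusted by an element of $\mathrm{Aut}_{\mathcal G}(\widehat Q)$ (possible since $\widehat Q$ is fully automized in $\mathcal G$) so that $\rho\,\mathrm{Aut}_P(Q^m)\,\rho^{-1}\le\mathrm{Aut}_S(\widehat Q)$; then $P\le N_{\rho\circ c_m|_Q}$, so by receptivity of $\widehat Q$ in $\mathcal G$ the morphism $\rho\circ c_m|_Q$ extends to a $\mathcal G$-homomorphism on a subgroup containing $P$, giving $\eta\in\mathrm{Hom}_{\mathcal G}(P,S)$ with $\eta|_Q=\rho\circ c_m|_Q$; finally one uses that $P$ is fully $\mathcal G$-normalized to transport $\eta(P)$ back onto $P$ and cancel $\rho$, producing $\beta\in\mathrm{Aut}_{\mathcal G}(P)$ with $\beta|_Q=c_m|_Q$, i.e. $m\in L\cdot C_M(Q)$. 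The delicate point is precisely this last cancellation: because $Q^m$ need not be fully $\mathcal G$-normalized, the auxiliary isomorphism $\rho$ cannot be avoided, and one must check that it drops out cleanly — this is where the three hypotheses must be used together, the one on the subgroups $T$ with $P<T\le N_S(P)$ being exactly what accounts for the part of $\mathrm{Aut}_{\mathcal F}(P)$ not already detected on $Q$, and the argument closing by the orbit count $M=L\cdot C_M(Q)$.
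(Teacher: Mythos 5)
This statement is imported by the paper from \cite[Main Lemma 2.4]{BGH} without an internal proof, so the benchmark is Benson--Grodal--Henke's original argument; measured against either that or simple correctness, your proposal has a genuine gap exactly where you flag it. Your preliminary reductions are fine ($P$ is also $\mathcal{G}$-centric and fully $\mathcal{G}$-normalized, the models $M$, $L$ exist, $N_M(T)\le L$ for $P<T\le N_S(P)$, and $P\le N_{\rho\circ c_m|_Q}$ after adjusting $\rho$ by an element of $\mathrm{Aut}_{\mathcal{G}}(\widehat Q)$). But you reduce the lemma to the exact factorization $\mathrm{Aut}_{\mathcal{F}}(P)=\mathrm{Aut}_{\mathcal{G}}(P)\cdot C_{\mathrm{Aut}_{\mathcal{F}}(P)}(Q)$, i.e.\ to the claim that every $\alpha\in\mathrm{Aut}_{\mathcal{F}}(P)$ agrees on $Q$ with some $\beta\in\mathrm{Aut}_{\mathcal{G}}(P)$. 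That is strictly stronger than the stated conclusion, which asserts only generation, and the conclusion is phrased with $\langle\,\cdot\,\rangle$ precisely because the natural construction does not yield a one-step factorization. Concretely, the $\mathcal{G}$-automorphism your construction produces is $\beta=\sigma\circ\eta|_P$, where $\sigma$ is whatever $\mathcal{G}$-morphism carries $\eta(P)$ back onto $P$; its restriction to $Q$ is $\sigma\circ\rho\circ c_m|_Q$, and nothing forces $\sigma\circ\rho$ to be the identity (or even to stabilize $\alpha(Q)$), so $\beta^{-1}\alpha$ need not centralize $Q$. The sentence ``one must check that it drops out cleanly'' is not a verifiable detail but the entire content of the lemma when $P<N_S(P)$ and $\alpha(Q)$ is not fully $\mathcal{G}$-normalized; the only case where your argument closes as written is $P=S$, where $Q\trianglelefteq S$ makes $\alpha(Q)$ automatically receptive and no correction $\rho$ is needed.

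Relatedly, the hypothesis $\mathrm{Aut}_{\mathcal{F}}(R)=\mathrm{Aut}_{\mathcal{G}}(R)$ for all $P<R\le N_S(P)$ appears in your write-up only as the unused observation $N_M(T)\le L$. In the genuine proof this hypothesis is the engine that absorbs the surviving correction terms: one extends suitable ($p$-power) automorphisms from $P$ to strictly larger subgroups $R$ inside $N_S(P)$ via the extension axiom, uses that $\mathcal{F}$ and $\mathcal{G}$ coincide on such $R$, and restricts back to $P$, so that the part of $\mathrm{Aut}_{\mathcal{F}}(P)$ not detected on $Q$ is accounted for only after an iterative argument landing in the \emph{generated} subgroup $\langle\mathrm{Aut}_{\mathcal{G}}(P),C_{\mathrm{Aut}_{\mathcal{F}}(P)}(Q)\rangle$, not in the product. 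As it stands, your proposal sets up the right objects and correctly isolates the obstruction, but it leaves that obstruction unresolved, and the stronger product statement it aims for is not established (and is not what the lemma claims).
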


Recall that $cl(C)$ is the nilpotent class of finite group $C$.
\begin{lemma} Let $p$ be an odd prime. Let $C$ be a finite $p$-group and $C$ is not cyclic. If $cl(C)\leq 2$ and $C/Z(C)$ is elementary,
Then $rk_p(\Omega_1(C))\geq 2$.
\end{lemma}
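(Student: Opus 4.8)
The plan is to show that, under the hypotheses, $\Omega_{1}(C)$ is itself a non-cyclic group of exponent $p$; once this is known, the conclusion that $rk_{p}(\Omega_{1}(C))\ge 2$ is immediate. First I would record the structural consequences of the hypotheses: since $C/Z(C)$ is elementary abelian and $cl(C)\le 2$, we have $\Phi(C)=C^{p}[C,C]\le Z(C)$, and in particular $C^{p}\le Z(C)$. Using that the commutator map is bilinear in a group of class at most $2$ (so $[x^{p},y]=[x,y]^{p}$ for all $x,y\in C$), the inclusion $C^{p}\le Z(C)$ gives $[x,y]^{p}=[x^{p},y]=1$; as $[C,C]$ is abelian and generated by such commutators, it follows that $[C,C]^{p}=1$.

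Next I would prove that the $p$-power map $\pi\colon C\to C$, $x\mapsto x^{p}$, is a group homomorphism. This follows from the class-$2$ power identity $(xy)^{p}=x^{p}y^{p}[y,x]^{\binom{p}{2}}$: since $p$ is odd we have $p\mid\binom{p}{2}$, and combined with $[C,C]^{p}=1$ this forces $[y,x]^{\binom{p}{2}}=1$, so $(xy)^{p}=x^{p}y^{p}$. Consequently $\Omega_{1}(C)=\ker\pi=\{x\in C: x^{p}=1\}$ is a subgroup of $C$ of exponent $p$, and $C/\Omega_{1}(C)\cong\operatorname{im}\pi=C^{p}$, so that $|\Omega_{1}(C)|=|C:C^{p}|$. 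This is the only step in which the oddness of $p$ is used, and it is genuinely needed: for $p=2$ the statement is false (for instance $C=Q_{8}$ has class $2$ with $C/Z(C)$ elementary abelian, yet $\Omega_{1}(C)=Z(C)$ has rank $1$).

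Finally I would conclude by a counting argument together with an elementary observation about groups of exponent $p$. Since $C$ is not cyclic we have $|C:\Phi(C)|\ge p^{2}$, and because $C^{p}\le\Phi(C)$ this yields $|\Omega_{1}(C)|=|C:C^{p}|\ge|C:\Phi(C)|\ge p^{2}$; hence $\Omega_{1}(C)$ is a non-cyclic group of exponent $p$. If $\Omega_{1}(C)$ is abelian it is elementary abelian of rank at least $2$ and we are done; otherwise pick $z$ of order $p$ in $Z(\Omega_{1}(C))$ and $x$ of order $p$ in $\Omega_{1}(C)\setminus Z(\Omega_{1}(C))$, and then $\langle x,z\rangle$ is elementary abelian of order $p^{2}$. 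In every case $rk_{p}(\Omega_{1}(C))\ge 2$.

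The argument is essentially routine, and I do not expect a serious obstacle; the one point requiring care is the verification that $\pi$ is a homomorphism, i.e. the interaction of $\binom{p}{2}$ with the relations $[C,C]^{p}=1$ and $p$ odd, which is precisely where the hypothesis on $p$ is used. (Alternatively one could invoke the classical fact that for $p$ odd a non-cyclic $p$-group has $p$-rank at least $2$, but the argument above is self-contained and uses the stated hypotheses directly.)
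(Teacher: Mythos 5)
Your proof is correct, and it takes a genuinely different route from the paper's. The paper argues element-by-element: it chooses non-commuting $a,b$ with $|a|+|b|+|[a,b]|$ minimal, reduces to the case $c=[a,b]$ of order $p$, and then shows that $a^{p^{n-1}}$ or $b^{p^{m-1}}$ must lie outside $\langle c\rangle$ (otherwise replacing $b$ by $a^{p^{n-m}}b^{-1}$ contradicts minimality), thereby exhibiting two independent elements of order $p$ in $\Omega_1(C)$ directly; the odd-$p$ power identity $(xy)^{p^k}=x^{p^k}y^{p^k}[y,x]^{\binom{p^k}{2}}$ is used implicitly in that last step. You instead use the hypotheses globally: from $C^p\le Z(C)$ and class $\le 2$ you get $[C,C]^p=1$, hence (since $p$ is odd) the $p$-th power map is an endomorphism, so $\Omega_1(C)$ is a subgroup of exponent $p$ with $|\Omega_1(C)|=|C:C^p|\ge |C:\Phi(C)|\ge p^2$, and a non-cyclic group of exponent $p$ visibly contains an elementary abelian subgroup of rank $2$. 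Your version yields strictly more information (the whole of $\Omega_1(C)$ has exponent $p$ and order at least $p^2$), isolates exactly where oddness of $p$ enters, and your $Q_8$ remark shows the hypothesis is sharp; the paper's minimality argument is more elementary in the sense that it never needs $\Omega_1(C)$ to be a subgroup, but it is computationally fussier and leaves the odd-$p$ power manipulations unstated. Both arguments are sound, and yours is arguably cleaner and more transparent.
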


\begin{proof}
If $C$ is abelian, it's nothing need to prove. Let $a, b\in C$ such that $[a, b]\neq 1$. Since  $cl(C)\leq 2$, we have $[a, b]=c\in Z(C)$.
Set $c:=[a, b]$ and $|c|=p^r$ for some integer. We shall suppose that $a, b$ so chosen that $|a|+|b|+|[a, b]|$is minimal subject to $[a, b]\neq 1$.
Since $cl(C)\leq 2$, we have
$[a, b^{p^{r-1}}]=c^{r-1}$. Now, we can assume that $[a, b]=c$ and $|c|=p$. Set $|a|=p^n, |b|=p^{m}$ for some integers $n$ and $m$.
We can see that $a^{p^{n-1}}, b^{p^{m-1}}, c\in \Omega_1(C)$. If $a^{p^{n-1}}\notin \langle c\rangle$ or $b^{p^{m-1}}\notin \langle c\rangle$, then $rk_p(\Omega_1(C))\geq 2$.
Suppose that $a^{p^{n-1}}, b^{p^{m-1}}\in \langle c\rangle$, we can assume that $a^{p^{n-1}}=b^{p^{m-1}}=c$ and $n\geq m$. We can see that $(a^{n-m}b^{-1})^{p^{m-1}}=1$
and $[a, a^{n-m}b^{-1}]=c^{-1}$, that is a contradiction to the minimal chosen.
\end{proof}

\section{\bf Proof of Theorem B}

{\bf Proof of Theorem B}~~
By Alperin's fusion theorem, $\mathcal{F}$ is generated by $\mathcal{F}$-automorphisms of fully $\mathcal{F}$-normalized and $\mathcal{F}$-centric subgroups; see [2, Theorem I.3.6] (in fact we only need "$\mathcal{F}$-essential" subgroups and $S$). We want to
show that $\mathrm{Aut}_{\mathcal{G}}(P) = \mathrm{Aut}_{\mathcal{F}}(P)$ for all $P\leq S$; by downward induction on the
order we can assume that $\mathrm{Aut}_{\mathcal{G}}(R) = \mathrm{Aut}_{\mathcal{F}}(R)$ for all subgroups $R \leq S$ with
$|R| > |P|$, and by the fusion theorem we can furthermore assume that $P$ is $\mathcal{F}$-centric and fully $\mathcal{F}$-normalized.

{\bf Step 1.} If $P=S$, since $S$ is not cyclic, by \cite[Theorem 3.11]{Go}, $P(=S)$ possesses a characteristic subgroup $C$ with the following properties:
(i) $cl(C)\leq 2$ and $C/Z(C)$ is elementary abelian;
(ii) $[P, C]\leq Z(C)$;
(iii) $C_P(C)\leq Z(C)$.

{\bf ~Case 1.} If $C$ is cyclic, we can set $C=\langle b\rangle$. Since $P/C=N_P(C)/C_P(C)\leq \mathrm{Aut}(C)$ and $C$ is cyclic, we have $P/C$
is cyclic because $\mathrm{Aut}(C)$ is cyclic(see \cite[Exe. 25]{B1}). So, we can set $P=\langle a, b\rangle$ and $P/C=\langle aC\rangle$.

Since $[P, C]\leq C$, we can suppose that
$$[a, b]=b^{p^t}.$$
Here, we can see that $P':=[P, P]=\langle b^{p^t} \rangle$ because $P=\langle a, b\rangle$.
Set $|a|:=p^n$ and $|b|:=p^m$ for some inters $n,m$. Set $A=\langle a^{p^{n-1}}, b^{p^{m-1}}\rangle=\Omega_1(P)$, we can see that $A$
is a normal elementary abelian subgroup of $P$ and $rk_p(A)=2$. So, by \cite[Main Lemma 2.4, Step 1]{BGH}, we have $\mathrm{Aut}_{\mathcal{G}}(P) = \mathrm{Aut}_{\mathcal{F}}(P)$.

{\bf ~Case 2.} If $C$ is not cyclic, set $D=\Omega_1(C)$. So, we have
$D$ is not cyclic when $p$ is odd. Then we have the rank of $D$ is more that 2.
So, by \cite[Main Lemma 2.4, Step 1]{BGH}, we have $\mathrm{Aut}_{\mathcal{G}}(P) = \mathrm{Aut}_{\mathcal{F}}(P)$.

{\bf Step 2.} If $P\lneq S$, we will prove the theorem in the following two cases.

{\bf ~Case 1.} If $P$ is cyclic, we can set $P=\langle a\rangle$. Since $P$ is centric, we have $C_{S}(P)\leq P$.
We can assume that $N_S(P)\gneq P$ because $S$ is not cyclic. For each $g\in N_S(P)$, we can set
$$a^g=a^t$$
for some integer $t$.
Let $\alpha\in \mathrm{Aut}_{\mathcal{F}}(P)$, we can set
$$\alpha(a)=a^i$$
for some integer $i$.
Hence, $$\alpha^{-1}\circ c_g\circ \alpha(a)=\alpha^{-1}\circ c_g(a^i)=\alpha^{-1}(a^{it})=\alpha^{-1}(a^{i})^t=a^t=c_g(a).$$
So, we have $\mathrm{Aut}_S(P)\unlhd \mathrm{Aut}_{\mathcal{F}}(P).$
But we assume that $N_S(P)\gneq P$, thus $P$ is not radical. That is a contradiction because that $P$ is $\mathcal{F}$-essential.

{\bf ~Case 2.} If $P$ is not cyclic, by \cite[Theorem 3.11]{Go}, $P$ possesses a characteristic subgroup $C$ with the following properties:
(i) $cl(C)\leq 2$ and $C/Z(C)$ is elementary abelian;
(ii) $[P, C]\leq Z(C)$;
(iii) $C_P(C)\leq Z(C)$.

{\bf ~Case 2.1.} If $C$ is cyclic, we assert that $P$ is not radical.
Since $C$ is cyclic, we can set $C=\langle b\rangle$. Since $P/C=N_P(C)/C_P(C)\leq \mathrm{Aut}(C)$ and $C$ is cyclic, we have $P/C$
is cyclic because $\mathrm{Aut}(C)$ is cyclic(see \cite[Exe. 25]{B1}). So, we can set $P=\langle a, b\rangle$ and $P/C=\langle aC\rangle$.

Since $[P, C]\leq C$, we can suppose that
$$[a, b]=b^{p^t}.$$
Here, we can see that $P':=[P, P]=\langle b^{p^t} \rangle$ because $P=\langle a, b\rangle$.
Now, we want to prove that $\mathrm{Aut}_S(P)\unlhd \mathrm{Aut}_{\mathcal{F}}(P)$.
We can construct a morphism form $\mathrm{Aut}_{\mathcal{F}}(P)$ to $\mathrm{Aut}(P/\Phi(P))$ as follows:
$$\phi: \mathrm{Aut}_{\mathcal{F}}(P)\longrightarrow \mathrm{Aut}(P/\Phi(P))$$
$$\alpha\longmapsto \overline{\alpha}$$
where $\overline{\alpha}(g\Phi(P))=\alpha(g)\Phi(P)$ for each $g\in P$. Here, $\Phi(P)$ is the Frattini subgroup of $P$.
By \cite[24.1]{A}, we can see that $\mathrm{Ker}(\phi)$ is a $p$-group.

Since $P=\langle a, b\rangle$, we have $\mathrm{Aut}(P/\Phi(P))\leq \mathrm{GL}_2(\mathbb{F}_p)$.
For each $\alpha\in \mathrm{Aut}_{\mathcal{F}}(P)$. Since $C$ is a characteristic subgroup of $P$,
we can set that
$$\alpha(a)=a^{r_1}b^{r_2}; ~\alpha(b)=b^{r_3}$$
where $r_1,r_2,r_3$ are integers.
So, we have an injective morphism as follows:
$$\overline{\phi}: \mathrm{Aut}_{\mathcal{F}}(P)/\mathrm{Ker}(\phi)\hookrightarrow \mathrm{Aut}(P/\Phi(P))\leq \mathrm{GL}_2(\mathbb{F}_p)$$
$$\alpha\longmapsto  \left(%
\begin{array}{cc}
\overline{r}_1 & 0 \\
\overline{r}_2 & \overline{r}_3 \\
\end{array}%
\right)$$
where $\overline{r}_i \in \mathbb{F}_p$ and $\overline{r}_i \equiv r_i ~\mathrm{mod}~p$ for $i=1,2,3.$

Since the subgroup $\{\left(%
\begin{array}{cc}
1 & 0 \\
\ast & 1 \\
\end{array}%
\right)\}$ is a normal Sylow $p$-subgroup of the group $\{\left(%
\begin{array}{cc}
\ast & 0 \\
\ast & \ast \\
\end{array}%
\right)\}$ in $\mathrm{GL}_2(\mathbb{F}_p)$,
we have the Sylow $p$-subgroup $\mathrm{Aut}_{\mathcal{F}}(P)/\mathrm{Ker}(\phi)$ is a normal subgroup of $\mathrm{Aut}_{\mathcal{F}}(P)/\mathrm{Ker}(\phi)$.
So, the Sylow $p$-subgroup $\mathrm{Aut}_{\mathcal{F}}(P)$ is a normal subgroup of $\mathrm{Aut}_{\mathcal{F}}(P)$ because $\mathrm{Ker}(\phi)$ is a $p$-group.
Here, $P$ is fully-normalized in $\mathcal{F}$, we have $\mathrm{Aut}_S(P)$ is a Sylow $p$-subgroup of $\mathrm{Aut}_{\mathcal{F}}(P)$.
Hence, $\mathrm{Aut}_S(P)\unlhd \mathrm{Aut}_{\mathcal{F}}(P)$.
That's means $P$ is not radical when $P\neq S$.

{\bf ~Case 2.2.} If $C$ is not cyclic, set $D=\Omega_1(C)$. So, we have
$D$ is not cyclic when $p$ is odd. Then we have the rank of $D$ is more that 2.
By \cite[Theorem 2.1]{BGH}, we can choose that maximal abelian subgroup $A$ of $D$.
Since the rank of $D$ is more that 2, we have the rank of $A$ is more than 2.
By the condition, we have $\mathrm{Aut}_{\mathcal{G}}(A)=\mathrm{Aut}_{\mathcal{F}}(A)$.
Hence, we can conclude that $\mathrm{Aut}_{\mathcal{G}}(P)= \mathrm{Aut}_{\mathcal{F}}(P)$
by the similar proof of \cite[Theorem B]{BGH}. ~~~~~~~~~~~~~~~~~~~~~~~~~~~~~~~~~~~~~~~$\square$

~

Similar to \cite[Theorem D]{BGH}, and using the above theorem we can get the following corollary.

\begin{corollary} Let $p$ be an odd prime. Let $T$ and $S$ be finite $p$-groups and they are not cyclic.
Let $\mathcal{F}$ and $\mathcal{G}$ be saturated fusion systems on finite $p$-groups $S$ and
$T$ respectively, and that $\phi: T\to S$ is a fusion preserving homomorphism
inducing a bijection $\mathrm{Rep}(A,G)\to \mathrm{Rep}(A,F)$ for any finite abelian $p$-group
$A$ with $2\leq rk_p(A) \leq rk_p(S)$. Then $\phi$ induces an isomorphism from $T$ to $S$ and $\mathcal{G}$
to $\mathcal{F}$.
\end{corollary}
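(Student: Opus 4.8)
The plan is to follow the proof of \cite[Theorem D]{BGH} essentially verbatim, with the family of all elementary abelian subgroups replaced by the family of elementary abelian (more generally, abelian) subgroups of rank at least $2$. The hypothesis that $S$ and $T$ are not cyclic, together with $p$ being odd, is precisely what legitimizes this replacement: it guarantees $rk_p(S), rk_p(T) \geq 2$, and that every order-$p$ element lying in the centre of $T$ (or of $S$) is contained in an elementary abelian subgroup of rank $2$. The argument then runs in three stages: first one shows $\phi$ is injective, then that $\phi$ is surjective (hence an isomorphism $T \to S$), and finally one identifies $T$ with $S$ and applies Theorem B to see that $\phi$ carries $\mathcal{G}$ onto $\mathcal{F}$.

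For injectivity, suppose $\ker\phi \neq 1$. Since $\ker\phi \trianglelefteq T$, it meets $Z(T)$ nontrivially, so we may choose $x \in \ker\phi \cap Z(T)$ of order $p$; as $T$ is not cyclic and $p$ is odd there is $y$ of order $p$ with $\langle y\rangle \neq \langle x\rangle$, and then $A := \langle x,y\rangle$ is elementary abelian of rank $2$ with $x \in A$ and $2 = rk_p(A) \leq rk_p(S)$. The map $\mathrm{Rep}(A,\mathcal{G}) \to \mathrm{Rep}(A,\mathcal{F})$ sends the class of $\rho$ to the class of $\phi\circ\rho$, an operation that can only shrink the image; tracing how the classes with image of each prescribed order must be matched up under this bijection, the fact that $x \in \ker\phi \cap A$ forces a failure of surjectivity — comparing in addition, if necessary, $\mathrm{Rep}(A',-)$ for elementary abelian $A'$ of rank between $2$ and $rk_p(S)$, exactly as in \cite[Theorem D]{BGH}. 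Hence $\phi$ is injective, so $rk_p(T) \leq rk_p(S)$; feeding an elementary abelian subgroup of $S$ of rank $rk_p(S)$ into the bijection then produces an injective homomorphism into $T$, whence $rk_p(T) = rk_p(S)$.

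For surjectivity I would transcribe the corresponding step of \cite[Theorem D]{BGH}: comparing, through the bijections $\mathrm{Rep}(A,\mathcal{G}) \cong \mathrm{Rep}(A,\mathcal{F})$ for $A$ abelian of rank $2,3,\dots,rk_p(S)$, the abelian subgroups of $T$ and of $S$ together with their $\mathcal{G}$- and $\mathcal{F}$-fusion, and concluding $|T| = |S|$. Every place in that argument invoking a rank-$1$ elementary abelian subgroup is replaced by one of rank $2$, available since $S$ and $T$ are non-cyclic. Together with the injectivity already proven, this shows $\phi$ is an isomorphism $T \to S$. I expect this stage to be the main obstacle: knowing merely which abelian groups occur as subgroups is far too weak, so the full strength of the $\mathrm{Rep}$-bijections (including the counting that already distinguishes, say, $C_p^p$ from $C_p \wr C_p$) must be exploited with care.

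Finally, identify $T$ with $S$ along the isomorphism $\phi$. Since $\phi$ is fusion preserving, $\mathcal{G}$ becomes a saturated fusion system $\phi_*\mathcal{G}$ on $S$ with $\phi_*\mathcal{G} \leq \mathcal{F}$. For elementary abelian $A,B \leq S$ of rank at least $2$, the bijection $\mathrm{Rep}(A,\mathcal{G}) \cong \mathrm{Rep}(A,\mathcal{F})$ — now induced by the identity of $S$ — identifies the $\phi_*\mathcal{G}$-classes of injective homomorphisms $A \to S$ with the $\mathcal{F}$-classes of such, and, running over the possible images $B$, this yields $\mathrm{Hom}_{\phi_*\mathcal{G}}(A,B) = \mathrm{Hom}_{\mathcal{F}}(A,B)$. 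Theorem B, applied to $\phi_*\mathcal{G} \leq \mathcal{F}$ on the non-cyclic $p$-group $S$, then gives $\phi_*\mathcal{G} = \mathcal{F}$; that is, $\phi$ induces an isomorphism of fusion systems from $\mathcal{G}$ to $\mathcal{F}$.
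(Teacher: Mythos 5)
Your proposal takes essentially the same route as the paper: the paper offers no detailed argument for this corollary, saying only that it follows ``similar to \cite[Theorem D]{BGH}'' with its Theorem B substituted for \cite[Theorem B]{BGH}, which is exactly your plan of rerunning the Theorem D argument with rank-one elementary abelian subgroups replaced by rank-two ones (available since $p$ is odd and $S$, $T$ are not cyclic) and concluding by applying Theorem B to the transported system $\phi_*\mathcal{G}\leq\mathcal{F}$ on $S$. In fact your sketch is more detailed than the paper's one-sentence proof; the injectivity and surjectivity/counting steps you flag as needing care are precisely the steps the paper delegates wholesale to \cite{BGH}.
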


Here, $\mathrm{Rep}(A,G)$ denotes the quotient of $\mathrm{Hom}(A,G)$ where we identify
$\phi$ with $c_g\circ \phi$ for all $g\in G$, and likewise $\mathrm{Rep}(A,\mathcal{F})$ is the quotient of
$\mathrm{Hom}(A,S)$, identifying two morphisms if they differ by a morphism in $\mathcal{F}$.

\end{document}